\newtheorem{theorem}{Theorem}[section]
\newtheorem{proposition}[theorem]{Proposition}
\newtheorem{lemma}[theorem]{Lemma}
\newtheorem{definition}[theorem]{Definition}
\journal{.}
\begin{document}

\begin{frontmatter}

\title{On affine homogeneous polynomial centrosymmetric matrices} 

 \author[manoel]{Miriam Manoel}
  \author[nery]{Leandro Nery}
  \affiliation[manoel]{organization={Department of Mathematics, ICMC,
  		University of São Paulo},
             addressline={13560-970},
             city={São Carlos},
             postcode={Caixa Postal 668},
            state={São Paulo},
             country={Brazil}}
 \affiliation[nery]{organization={Department of Mathematics, CCET,
 		Federal University of São Carlos},
             addressline={13565-905},
             city={São Carlos},
             postcode={Caixa Postal 676},
             state={São Paulo},
             country={Brazil}}

\begin{abstract}
		We explore a class of centrosymmetric matrices whose entries are polynomials in two variables, referred to as DNA matrices. Our motivation stems from an unexpected connection between these matrices and invariant polynomials under the action of a Lorentz rotation on the plane. Among several noteworthy properties, we establish that within a subclass of DNA matrices, singular matrices occur precisely when their order is even, and we determine their null space in such cases.  The results provide new insights into the structural characteristics of centrosymmetric matrices, expanding their theoretical foundations and potential applications. 
\end{abstract}

\begin{keyword}
Centrosymmetric matrices \sep
Polynomial invariants \sep
rotational Lorentz group \sep
Minkowski space 

\bigskip

 \MSC[2020] 13A50 \sep 15A24 \sep 20C32 \sep 20G05 \sep 22E43 \sep 22E70

\end{keyword}

\end{frontmatter}

\section{Introduction} \label{sec: introduction}

The study of centrosymmetric matrices dates back to the 19th century, introduced by Zehfuss  (\cite{zehfuss1862zwei}). Such matrices, characterized by their invariance under central reflection, have played an important role in various areas of mathematics and applied sciences. Over the years, numerous researchers have explored their properties and applications, leading to significant theoretical developments and practical implementations.
A substantial body of research has explored the spectral properties, inverse structure, and computational aspects of centrosymmetric matrices (\cite{ANDREW1973151,cantoni1976eigenvalues, centro5, centro3, centro1, centro2, centro4, centro6}). Efficient algorithms have been developed to solve centrosymmetric linear systems (\cite{centro5, centro4}), and generalizations of centrosymmetric matrices have been treated by \cite{centro2}. Beyond theoretical considerations, centrosymmetric matrices have also found widespread applications; among many others, we cite \cite{centro7engenharia} for applications in engineering, \cite{centro8estatistica} in the statistical classification of data analysis, and  \cite{centro9datta} for an application in pattern recognition, with a connection between matrix symmetries and computational techniques for signal processing. More recently, these matrices have found applications in various fields, including algebraic curves, optimization, and biology (\cite{CS-Biologia,CS-Curvas24,BRACCIALI2024212,CS-Optimal25}).
\\

Recall that a square matrix $(a_{i,j})$ of order $n$ is centrosymmetric if
	\begin{eqnarray}
		a_{i,j} = a_{n+1-i, n+1-j}, \quad i,j = 1, \dots, n.
	\end{eqnarray}
This paper introduces a novel class of centrosymmetric matrices, the DNA matrices. As we shall see, these naturally appear in invariant theory, from manipulating real homogeneous polynomials defined on the Minkowski plane ${\mathbb R}^{1+1}$,
\[ f: {\mathbb R}^{1+1} \to {\mathbb R}, \]
that are invariant under the standard linear action of a hyperbolic rotation matrix $R_\theta \in {\bf O}(1,1)$, that is, 
\begin{equation} \label{eq: invariance}
 f(R_\theta x) = f(x), \quad x \in {\mathbb R}^{1+1}.
 \end{equation}
For details on invariant theory for linear action of Lorenz groups on  Minkowski spaces we refer to \cite{leandro}. We use the graded algebra structure of the ring of invariant polynomials and apply (\ref{eq: invariance}) on a general homogeneous polynomial of an arbitrary degree $n$. This ring is generated by 
$||x||_1^2$ (see Subsection~\ref{subseq: generator}).  The calculations result in a linear system of order $n+1$ whose  coefficient matrix  depends polynomially on two parameters $\alpha$ and $\beta$ that we now define. For practical purposes, the definition is presented for matrices of order $n+1$:

\begin{definition} \label{def: cs matrix}
For arbitrary  $\alpha$ and $\beta$, a square matrix $(a_{i,j})$ of order $n+1$ is a DNA matrix if its entries are given by the generating function
\footnotesize
\begin{eqnarray} \label{dna}
	a_{i,j} = \sum _{s=0}^{j-1} \binom{j-1}{s} \binom{n-j+1}{i-j+s} \alpha^{n-(i-j+2s)} \beta^{i-j+2s} + \sum _{r=0}^{j-1} \left( -1 \right) ^{r+1} \binom{j-r}{i} \binom{j}{r},
\end{eqnarray}   \normalsize
where $q$ assumes all integer values under the convention $\binom{p}{q} = 0$ whenever $p < q$ or $q < 0$.
  \end{definition}
\noindent This matrix has affine homogeneous polynomial entries: all entries of this matrix are homogeneous polynomials of degree $n$ except that on the diagonal we subtract 1. \\

We prove that a DNA matrix is centrosymmetric (Theorem~\ref{thm: DNA is CSM}), which is our main result. The nomenclature has been inspired by its shape resemblance to a DNA helix; see in Section~\ref{sec: sample computations} the explicit construction of the matrices of orders 2 to 7. As we see in Section~{\ref{sec: main}, the close relation with the rotational invariant homogeneous polynomials allows us to establish that the classification of a class of DNA matrices as singular or non-singular is determined precisely by the parity of their order: it is non-singular if, and only if, its order is odd. Furthermore, by explicitly determining the solution space of the associated linear system in
the singular case, we will also show that in the singular case the null space  has minimal dimension, namely 1.  \\

We emphasize that the centrosymmetry property that we encounter in the present paper holds for any DNA matrix with polynomial entries on two independent variables $\alpha$ and $\beta$, although these numbers were originally placed in context  as $\alpha = \cosh \theta$ and $\beta = \sinh \theta$.  In fact, this is an algebraic property inherited by the invariance condition (\ref{eq: invariance}) for a fixed $\theta$ and does not depend on the condition $\alpha^2 - \beta^2 = 1$.  When this additional condition is considered, we uncover further properties of its determinant and null space. \\

DNA matrices enrich the study of centrosymmetric structures by offering a novel polynomial-based formulation. This could lead to further investigations in linear algebra, combinatorial identities, and their potential applications in symbolic computation. 
Preliminary investigations  suggest that exploring an analogous context for other group actions and higher-dimensional spaces is a promising direction and may lead to new results on the structures and properties of centrosymmetric matrices. In fact, some analogous attempts to what we do in this article, applied to planar rotations in the Euclidean orthogonal group, yield matrices with a certain structure reminiscent of DNA matrices. However, these matrices do not fit into any well-known classes, such as symmetric, skew-symmetric, Hankel, Toeplitz, per-symmetric, anti-centrosymmetric matrices, and so on. Hence, as of now, questions in this direction remain open and unanswered. \\

The connection we establish here with invariant theory to discover a new class of centrosymmetric matrices suggests that attempts to develop results in the opposite direction are worthwhile. In fact, the problem of determining a Hilbert basis for the ring of invariant polynomials under the action of a Lie group is known in particular cases, for example when the Lie group is compact (\cite{hilbert1970theorie}, \cite{hilbert1970vollen}) and for  reductive groups (\cite{luna}). However, the existence of a Hilbert basis remains an open problem in many cases. \\

In the following sections, we develop the theoretical framework for DNA matrices, proving their centrosymmetry and exploring their structural properties. Section~\ref{sec: sample computations} presents explicit computations for DNA matrices of small orders, illustrating their construction and behavior. In Section~\ref{sec: main}, we establish their centrosymmetric structure rigorously and examine the role of polynomial invariants in their classification as singular or non-singular matrices. Section~\ref{sec: class} investigates additional algebraic properties when specific constraints are imposed on the parameters, such as the condition 
$\alpha^2 - \beta^2 = 1$, leading to further insights into their determinant and null space.

\section{Sample computations of centrosymmetric matrices} \label{sec: sample computations}

In this section we compute centrosymmetric matrices for six specific homogeneous polynomial equations: from linear equations to equations of degree 6.  \\

The ring of invariants is a graded algebra for which we expand degree by degree. We fix any $\theta \neq 0$ and consider  homogeneous polynomials $f_i : \mathbb{R}^{1+1} \to \mathbb{R}$ invariant under the (group generated by the) hyperbolic rotation $R_\theta$, $i = 1, \ldots 6$. Set  $\alpha = \cosh \theta$ and $\beta = \sinh \theta$. \\

Starting with degree 1, $f_1(x,y) = a_1 x + a_2 y$ and the invariance gives
\begin{eqnarray*}
	f_1(\alpha x + \beta y, \beta x + \alpha y) - f_1(x,y) = 0, \quad (x,y) \in \mathbb{R}^{1+1},
\end{eqnarray*}
or 
\begin{eqnarray*}
	\begin{cases}
		(\alpha - 1)a_1 + \beta a_2 = 0, \\
		\beta a_1 + (\alpha - 1)a_2 = 0.
	\end{cases}
\end{eqnarray*}
The coefficient matrix of this system is
\begin{eqnarray*}
	A_1  =  \begin{pmatrix} \alpha - 1 & \beta \\ \beta & \alpha - 1 \end{pmatrix},
\end{eqnarray*}
which is centrosymmetric for any pair of (independent) constants $\alpha$ and $\beta$. For  $\alpha = \cosh \theta$ and $\beta = \sinh \theta$,   we have that $\alpha \neq 0$, so it is straightforward that this is a nonsingular matrix. \\

We proceed similarly for the other cases,
\begin{eqnarray*}
	f_2(x,y) &=& a_1 x^2 + a_2 xy + a_3 y^2, \\
	f_3(x,y) &=& a_1 x^3 + a_2 x^2 y + a_3 xy^2 + a_4 y^3, \\
    	f_4(x,y) &=& a_1 x^4 + a_2 x^3 y + a_3 x^2 y^2 + a_4 x y^3 + a_5 y^4, \\
	f_5(x,y) &=& a_1 x^5 + a_2 x^4 y + a_3 x^3 y^2 + a_4 x^2 y^3 + a_5 xy^4 + a_6 y^5, \\
	f_6(x,y) &=& a_1 x^6 + a_2 x^5 y + a_3 x^4 y^2 + a_4 x^3 y^3 + a_5 x^2 y^4 + a_6 xy^5 + a_7 y^6.
\end{eqnarray*}
to obtain the coefficient matrices of the corresponding linear systems: 
\begin{eqnarray*}
A_2  =  	\begin{pmatrix}
		 \alpha^2-1 &  \alpha \beta & \beta^2 \\ 2\alpha \beta & \alpha^2+\beta^2-1 & 2\alpha \beta \\ \beta^2 & \alpha \beta & \alpha^2-1
	\end{pmatrix}
\end{eqnarray*}
\begin{eqnarray*}
	A_3 =  \begin{pmatrix}
		\alpha^3-1 & \alpha^2 \beta & \alpha \beta^2 & \beta^3 \\3 \alpha^2 \beta & \alpha^3+2 \alpha \beta^2-1 & 2 \alpha^2 \beta+ \beta^3 & 3 \alpha \beta^2 \\ 3 \alpha \beta^2 & 2 \alpha^2 \beta+\beta^3 &  \alpha^3+2 \alpha \beta^2-1 & 3 \alpha^2 \beta \\ \beta^3 & \alpha \beta^2 & \alpha^2 \beta & \alpha^3-1
	\end{pmatrix},
\end{eqnarray*}
\begin{eqnarray*}
	A_4 = \resizebox{0.9\linewidth}{!}{%
    $\begin{pmatrix}
		\alpha^4-1 & \alpha^3 \beta & \alpha^2 \beta^2 & \alpha \beta^3 &\beta^4 \\
		4 \alpha^3 \beta & \alpha^4+3 \alpha^2 \beta^2-1 & 2 \alpha^3 \beta+2 \alpha \beta^3& 3 \alpha^2 \beta^2+\beta^4 & 4\alpha \beta^3 \\ 
		6\alpha^2\beta^2 & 3\alpha^3 \beta+3 \alpha \beta^3 & \alpha^4+4 \alpha^2 \beta^2+\beta^4-1 & 3\alpha^3 \beta+3 \alpha \beta^3 & 6\alpha^2 \beta^2 \\ 4\alpha \beta^3 &3 \alpha^2 \beta^2+\beta^4 & 2\alpha^3 \beta+2 \alpha \beta^3 & \alpha^4+3 \alpha^2 \beta^2-1 & 4\alpha^3 \beta \\\beta^4 & \alpha \beta^3 & \alpha^2 \beta^2 & \alpha^3 \beta & \alpha^4-1
	\end{pmatrix},$}
\end{eqnarray*}
\begin{align*}
	& A_5 = \resizebox{0.9\linewidth}{!}{%
		$ \begin{pmatrix}
			\alpha^5-1 & \alpha^4 \beta & \alpha^3 \beta^2 & \alpha^2 \beta^3 & \alpha \beta^4 & \beta^5 \\ 5 \alpha^4 \beta & \alpha^5 +4 \alpha^3 \beta^2-1 & 2 \alpha^4 \beta+3 \alpha^2 \beta^3 & 3 \alpha^3 \beta^2+2 \alpha \beta^4 & 4 \alpha^2 \beta^3+\beta^5 & 5 \alpha \beta^4 \\ 10 \alpha^3 \beta^2 & 4 \alpha^4 \beta+6 \alpha^2 \beta^3 & \alpha^5+6 \alpha^3 \beta^2+3 \alpha \beta^4-1 & 3 \alpha^4 \beta+6 \alpha^2 \beta^3+\beta^5 & 6 \alpha^3 \beta^2+4 \alpha \beta^4 & 10 \alpha^2 \beta^3 \\ 10 \alpha^2 \beta^3 & 6 \alpha^3 \beta^2+4 \alpha \beta^4 & 3 \alpha^4 \beta+6 \alpha^2 \beta^3+\beta^5 & \alpha^5+6 \alpha^3 \beta^2+3 \alpha \beta^4-1 & 4 \alpha^4 \beta+6 \alpha^2 \beta^3 & 10 \alpha^3 \beta^2 \\ 5 \alpha \beta^4 & 4 \alpha^2 \beta^3+\beta^5 & 3 \alpha^3 \beta^2+2 \alpha \beta^4 & 2 \alpha^4 \beta+3 \alpha^2 \beta^3 & \alpha^5+4 \alpha^3 \beta^2-1 & 5 \alpha^4 \beta \\ \beta^5 & \alpha \beta^4 & \alpha^2 \beta^3 & \alpha^3 \beta^2 & \alpha^4 \beta & \alpha^5-1 
	\end{pmatrix},$
}
\end{align*}
\begin{align*}
	&A_6 = \resizebox{0.9\linewidth}{!}{%
		$  \begin{pmatrix}
			\alpha^6-1 & \alpha^5 \beta & \alpha^4 \beta^2 & \alpha^3 \beta^3 & \alpha^2 \beta^4 & \alpha \beta^5 & \beta^6 \\ 6 \alpha^5 \beta & \alpha^6+5 \alpha^4 \beta^2-1 & 2 \alpha^5 \beta+4 \alpha^3 \beta^3 & 3 \alpha^4 \beta^2+3 \alpha^2 \beta^4 & 4 \alpha^3 \beta^3+2 \alpha \beta^5 & 5 \alpha^2 \beta^4+\beta^6 & 6 \alpha \beta^5 \\ 15 \alpha^4 \beta^2 & 5 \alpha^5 \beta+10 \alpha^3 \beta^3 & \alpha^6+8 \alpha^4 \beta^2+6 \alpha^2 \beta^4-1 & 3 \alpha^5 \beta+9 \alpha^3 \beta^3+3 \alpha \beta^5 & 6 \alpha^4 \beta^2+8 \alpha^2 \beta^4+\beta^6 & 10 \alpha^3 \beta^3+5 \alpha \beta^5 & 15 \alpha^2 \beta^4 \\ 20 \alpha^3 \beta^3 & 10 \alpha^4 \beta^2+10 \alpha^2 \beta^4 & 4 \alpha^5 \beta+12 \alpha^3 \beta^3+4 \alpha \beta^5 & \alpha^6+9 \alpha^4 \beta^2+9 \alpha^2 \beta^4 +\beta^6-1 & 4 \alpha^5 \beta+12 \alpha^3 \beta^3+4 \alpha \beta^5 & 10 \alpha^4 \beta^2+10 \alpha^2 \beta^4 & 20 \alpha^3 \beta^3 \\ 15 \alpha^2 \beta^4 & 10 \alpha^3 \beta^3+5 \alpha \beta^5 & 6 \alpha^4 \beta^2+8 \alpha^2 \beta^4+\beta^6 & 3 \alpha^5 \beta+9 \alpha^3 \beta^3+3 \alpha \beta^5 & \alpha^6+8 \alpha^4 \beta^2+6 \alpha^2 \beta^4-1 & 5 \alpha^5 \beta+10 \alpha^3 \beta^3 & 15 \alpha^4 \beta^2 \\ 6 \alpha \beta^5 & 5 \alpha^2 \beta^4+\beta^6 & 4 \alpha^3 \beta^3+2 \alpha \beta^5 & 3 \alpha^4 \beta^2+3 \alpha^2 \beta^4 & 2 \alpha^5 \beta+4 \alpha^3 \beta^3 & \alpha^6+5 \alpha^4 \beta^2-1 & 6 \alpha^5 \beta \\ \beta^6 & \alpha \beta^5 & \alpha^2 \beta^4 & \alpha^3 \beta^3 & \alpha^4 \beta^2 & \alpha^5 \beta  & \alpha^6-1
		\end{pmatrix}$.
	}
\end{align*}

\quad

We emphasize that centrosymmetry is clearly a property in all these cases for  
arbitrary $\alpha$ and $\beta$. Now, by imposing $\alpha = \cosh \theta$ and $\beta = \sinh \theta$, from basic computational tools  we obtain
\[ \det A_{2k-1} \neq 0, \  \det A_{2k} = 0, \quad k = 1,2,3, \]
which is in agreement with the fact that  the invariant ring is generated by $x^2 - y^2$ and so nontrivial solutions of the linear system associated with $A_i$ exist precisely if $i$ is odd.

In general, for homogeneous $f_n(x,y) = \sum \limits_{i=0}^{n} a_i x^{n-i} y^i$, the invariance equation is
\begin{eqnarray} \label{eq: polynomial equation}  
	\sum \limits_{i=0}^{n} a_i \left( (\alpha x + \beta y)^{n-i} (\beta x + \alpha y)^i - x^{n-i} y^i \right) = 0,  
\end{eqnarray}  
whose associated coefficient matrix $A_n$  is a DNA matrix of order $n+1$, as defined in (\ref{dna}). In Theorem~\ref{thm: DNA is CSM} of the next section we prove that this is centrosymmetric.

\section{The centrosymmetric structure of DNA matrices} \label{sec: main}

In this section we prove that a matrix satisfying (\ref{dna}) is centrosymmetric.  \\

We start by recalling the Stifel's relation of binomial coefficients, 
\[
\binom{n-1}{k-1}+\binom{n-1}{k}=\binom{n}{k},
\]
for integers $n \geq k > 0$. In the next lemma we generalize this relation:

\begin{lemma}[$p$-Stifel's relation] \label{lemma: pStifel}
	Let $p$ be a non-negative integer. Then,
	\begin{eqnarray}
		\binom{n}{k}=\sum\limits_{i=0}^{p} \binom{p}{i} \binom{n-p}{k-(p-i)}. \label{stiffel}
	\end{eqnarray}
\end{lemma}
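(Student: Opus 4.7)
The identity to prove is the classical Vandermonde convolution in disguise: reindexing $j=p-i$ turns the right-hand side into $\sum_{j=0}^{p}\binom{p}{j}\binom{n-p}{k-j}$, which equals $\binom{n}{k}$ by Vandermonde. However, since the authors explicitly motivate the lemma as an iterated version of Stifel's relation (which is the $p=1$ case), the most natural proof -- and almost certainly the one intended -- is induction on $p$.

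The plan is to induct on $p$, with base case $p=0$ giving $\binom{n}{k}=\binom{n}{k}$ trivially (or equivalently taking $p=1$ as the base case, which is exactly Stifel's relation). For the inductive step, assume
\[
\binom{n}{k}=\sum_{i=0}^{p}\binom{p}{i}\binom{n-p}{k-(p-i)}
\]
and apply Stifel's relation to each factor $\binom{n-p}{k-(p-i)}=\binom{n-p-1}{k-(p-i)-1}+\binom{n-p-1}{k-(p-i)}$. This splits the sum in two. The first piece already has the shape $\sum_{i=0}^{p}\binom{p}{i}\binom{n-p-1}{k-((p+1)-i)}$. In the second piece, shift the summation index $i\mapsto i-1$ so that the lower binomial factor also becomes $\binom{n-p-1}{k-((p+1)-i)}$; this turns it into $\sum_{i=1}^{p+1}\binom{p}{i-1}\binom{n-p-1}{k-((p+1)-i)}$.

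Adding the two pieces, the coefficients of $\binom{n-p-1}{k-((p+1)-i)}$ for $1\leq i\leq p$ are $\binom{p}{i}+\binom{p}{i-1}=\binom{p+1}{i}$ by Stifel, while the boundary terms $i=0$ and $i=p+1$ contribute $\binom{p}{0}=\binom{p+1}{0}$ and $\binom{p}{p}=\binom{p+1}{p+1}$, respectively. Hence
\[
\binom{n}{k}=\sum_{i=0}^{p+1}\binom{p+1}{i}\binom{n-p-1}{k-((p+1)-i)},
\]
which completes the induction. The convention $\binom{p}{q}=0$ for $p<q$ or $q<0$ is used implicitly to handle the range of $i$ for which $k-(p-i)$ falls outside $\{0,\ldots,n-p\}$, so no case distinction on the size of $k$ relative to $p$ is required.

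The only mildly delicate point is the index shift and the bookkeeping of boundary terms $i=0$ and $i=p+1$, but both are tied down by the $\binom{p}{-1}=\binom{p}{p+1}=0$ convention, so there is no real obstacle; the argument is mechanical once the induction is set up.
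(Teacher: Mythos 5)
Your proof is correct and takes essentially the same route as the paper's: induction on $p$ with two applications of Stifel's relation (the paper expands the factor $\binom{p+1}{i}$ and recombines the $\binom{n-(p+1)}{\cdot}$ terms, while you expand $\binom{n-p}{k-(p-i)}$ and recombine the $\binom{p}{i}$ terms --- mirror images of the same computation). Your opening observation that the identity is just Vandermonde's convolution after the substitution $j=p-i$ is a valid one-line alternative the paper does not mention, but since you ultimately run the induction, the arguments coincide in substance.
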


\begin{proof}
	The proof follows by induction. For $p=0$, we have
	\[
	\sum\limits_{i=0}^{0} \binom{p}{i} \binom{n-0}{k-(0-i)}=\binom{n}{k}.
	\]
	Suppose that (\ref{stiffel}) holds for $p$. We prove for $p+1$:
	\footnotesize
	\begin{eqnarray*}
		\sum\limits_{i=0}^{p+1} \binom{p+1}{i} \binom{n-(p+1)}{k-(p+1-i)} &=& \sum\limits_{i=0}^{p+1} \left(\binom{p}{i-1}+\binom{p}{i} \right) \binom{n-(p+1)}{k-(p+1-i)} \\
		&=& \sum\limits_{i=0}^{p+1} \binom{p}{i-1}  \binom{n-(p+1)}{k-(p+1-i)} \\
		&& + \sum\limits_{i=0}^{p+1} \binom{p}{i} \binom{n-(p+1)}{k-(p+1-i)}  \\
		&=& \sum\limits_{i=0}^{p}  \binom{p}{i} \left( \binom{n-(p+1)}{k-(p-i)}+ \binom{n-(p+1)}{k-(p+1-i)} \right).
	\end{eqnarray*}
	\normalsize
	Using Stifel's relation again,
	\[
	\binom{n-(p+1)}{k-(p-i)}+ \binom{n-(p+1)}{k-(p+1-i)} =\binom{n-p}{k-(p-i)},
	\]
	which completes the proof.
\end{proof}

\bigskip

Notice that we recover Stifel's relation if $p=1$:
	\begin{eqnarray*}
		\binom{n}{k}=\sum\limits_{i=0}^{1} \binom{1}{i} \binom{n-1}{k-(1-i)} = \binom{n-1}{k-1}+ \binom{n-1}{k}.
	\end{eqnarray*} 

To prove the main result, we also need:

\begin{lemma} \label{lemma: control of sums}
	The following equalities hold:
	\[
\begin{array}{l} 
		\sum\limits_{r=0}^{j-1} (-1)^{r+1} \binom{j-r}{i} \binom{j}{r}=0, \quad \text{}  j \neq i, \\ 
        \\
		\sum\limits_{r=0}^{j-1} (-1)^{r+1} \binom{j-r}{i} \binom{j}{r}=-1, \quad \text{}  j = i. 
	\end{array}
	\]
\end{lemma}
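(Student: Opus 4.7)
The plan is to split into the cases $j < i$ and $j \ge i$. When $j < i$, every term of the sum has $\binom{j-r}{i} = 0$ for $r \ge 0$, since $j - r \le j < i$, so the sum vanishes and there is nothing to prove; this disposes of the $j < i$ part of the $j \neq i$ statement.

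For $j \ge i$ (with $i \ge 1$, the relevant range for row and column indices of the DNA matrix), I would first enlarge the range of summation to include $r = j$: since $\binom{0}{i} = 0$ when $i \ge 1$, this adds nothing. Substituting $s = j - r$ then rewrites the sum as
\[
\sum_{s=0}^{j} (-1)^{j-s+1}\binom{s}{i}\binom{j}{s}.
\]
The decisive step will be the absorption identity
\[
\binom{j}{s}\binom{s}{i} = \binom{j}{i}\binom{j-i}{s-i},
\]
which allows me to pull $\binom{j}{i}$ out of the sum. After shifting by $t = s - i$ (legitimate because $\binom{s}{i} = 0$ for $s < i$), what remains is
\[
(-1)^{j+i+1}\binom{j}{i}\sum_{t=0}^{j-i}(-1)^{t}\binom{j-i}{t} = (-1)^{j+i+1}\binom{j}{i}(1-1)^{j-i}.
\]

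The two cases then follow immediately: if $j > i$, the final factor $0^{j-i}$ is zero; if $j = i$, it equals $1$ and the expression collapses to $(-1)^{2j+1}\binom{j}{j} = -1$, exactly as claimed.

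I do not anticipate any conceptual obstacle; the only point that will require care is the sign bookkeeping through the substitutions $r \mapsto j - r$ and $s \mapsto s - i$, together with faithful use of the convention $\binom{p}{q} = 0$ for $q < 0$ or $p < q$ that validates the various changes of summation range.
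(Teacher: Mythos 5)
Your argument is correct, and it takes a genuinely different and shorter route than the paper's. The paper handles the three cases $j=i$, $j<i$, $j>i$ separately: for $j=i$ only the $r=0$ term survives; for $j<i$ every term vanishes; and for $j>i$ it writes the summand as $(-1)^{r+1}\frac{j!}{i!\,r!\,(j-r-i)!}$ and then splits according to the parity of $j-i$, using an antisymmetric pairing $a_{2t+1-r}=-a_r$ in the odd case and, in the even case, a symmetric pairing plus a separate induction on the partial sums $S_n$ to cancel the middle term. Your trinomial-revision (absorption) identity $\binom{j}{s}\binom{s}{i}=\binom{j}{i}\binom{j-i}{s-i}$ collapses all of the $j\ge i$ analysis into the single evaluation $(-1)^{j-i+1}\binom{j}{i}(1-1)^{j-i}$, which treats $j=i$ and $j>i$ uniformly and avoids both the parity split and the auxiliary induction; this is cleaner and arguably more illuminating. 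Two points you flagged are indeed the only delicate ones and you handle them correctly: extending the range to $r=j$ is only harmless because $\binom{0}{i}=0$ requires $i\ge 1$ (the identity genuinely fails for $i=0$, e.g. $j=2$, $i=0$ gives $1$, not $0$, so the restriction to matrix indices $i\ge 1$ is essential and worth stating explicitly), and the absorption identity must be checked to hold termwise also for $s<i$, where both sides vanish under the stated binomial convention.
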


\begin{proof}
	For $j = i$, we have that $\binom{i-r}{i} = 0$ for all $r > 0$, since $i - r < i$ for $r > 0$. Therefore, we obtain
	\begin{eqnarray*}
		\sum\limits_{r=0}^{i-1} (-1)^{r+1} \binom{i-r}{i} \binom{i}{r} &=&  (-1)^{0+1} \binom{i-0}{i} \binom{i}{0}+\sum\limits_{r=1}^{i-1} (-1)^{r+1} \binom{i-r}{i} \binom{i}{r} \\
		&=& -1.
	\end{eqnarray*}
	Now, suppose that $j \neq i$. We consider two cases:
	
	\noindent {\bf Case $j < i$:}
	Since $j - r < i$ for all $r \geq 0$, it follows that $\binom{j-r}{i} = 0$ for all $r \geq 0$, which implies that
	\[
	\sum\limits_{r=0}^{j-1} (-1)^{r+1} \binom{j-r}{i} \binom{j}{r} = 0.
	\]
	{\bf Case $j > i$:}
	In this case, we rewrite the summation as:
	\begin{eqnarray*}
		\sum\limits_{r=0}^{j-1} (-1)^{r+1} \binom{j-r}{i} \binom{j}{r} = \sum\limits_{r=0}^{j-i} (-1)^{r+1} \binom{j-r}{i} \binom{j}{r},
	\end{eqnarray*}
	since $\binom{j-r}{i} = 0$ for $r > j - i$. Hence, we obtain
	\begin{eqnarray*}
		\sum\limits_{r=0}^{j-i} (-1)^{r+1} \binom{j-r}{i} \binom{j}{r} = \sum\limits_{r=0}^{j-i} (-1)^{r+1} \frac{j!}{i!r!(j-r-i)!}.
	\end{eqnarray*}
	Now, observe that if we define $a_r = (-1)^{r+1} \frac{1}{r!(2t+1-r)!}$, then $a_{2t+1-r} = -a_r$. If $j - i = 2t+1$ is odd, then
	\begin{eqnarray*}
		\sum\limits_{r=0}^{j-i} \frac{(-1)^{r+1} j!}{i!r!(j-r-i)!} &=& \frac{j!}{i!} \sum\limits_{r=0}^{2t+1} a_r \\
		&=& \frac{j!}{i!} \left( \sum\limits_{r=0}^{t} a_r+\sum\limits_{r=t+1}^{2t+1} a_r \right) \\
		&=& \frac{j!}{i!} \left( \sum\limits_{r=0}^{t} a_r+\sum\limits_{r=0}^{t} a_{2t+1-r} \right) = 0.
	\end{eqnarray*}
	Thus, for odd values of $j - i$,
	\[
	\sum\limits_{r=0}^{j-i} (-1)^{r+1} \frac{j!}{i!r!(j-r-i)!} = 0.
	\]
	Now, consider $b_r = (-1)^{r+1} \frac{1}{r!(2t-r)!}$. Then, $b_{2t-r} = b_r$. If $j - i = 2t$ is even, we obtain 
	\begin{eqnarray*}
		\sum\limits_{r=0}^{j-i} (-1)^{r+1} \frac{j!}{i!r!(j-r-i)!} &=& \frac{j!}{i!}  \sum\limits_{r=0}^{2t} b_r \\
		&=& \frac{j!}{i!} \left( \sum\limits_{r=0}^{t-1} b_r+(-1)^{t+1} \frac{1}{t!t!}+\sum\limits_{r=t+1}^{2t} b_r \right)  \\
		&=& \frac{j!}{i!} \left( \sum\limits_{r=0}^{t-1} b_r+(-1)^{t+1} \frac{1}{t!t!}+\sum\limits_{r=0}^{t-1} b_{2t-r} \right) \\
		&=& \frac{j!}{i!} \left( 2 \sum\limits_{r=0}^{t-1} b_r+(-1)^{t+1} \frac{1}{t!t!} \right).
	\end{eqnarray*}
	Thus, it suffices to prove that 
	\[
	2 \sum\limits_{r=0}^{t-1} b_r = -(-1)^{t+1} \frac{1}{t!t!}.
	\]
	For that,  define
	\[
	S_n = 2(b_0+b_1+ \cdots +b_n) = (-1)^{n+1} \frac{1}{n!t(2t-(n+1))!}.
	\]
	We have that
	\[
	S_0 = (-1) \frac{1}{0!t(2t-1)!} = - \frac{2}{2t(2t-1)!} = - \frac{2}{(2t)!} = 2b_0
	\]
and	$S_1 = \frac{1}{t(2t-2)!}$. On the other hand,
	\begin{eqnarray*}
		S_0+2b_1 &=& - \frac{2}{(2t)!}+\frac{2}{(2t-1)!} \\
		&=& - \frac{2}{(2t)!}+\frac{2(2t)}{(2t)(2t-1)!} \\
		&=& \frac{2(2t-1)}{2t(2t-1)!} = \frac{1}{t(2t-2)!} = S_1.
	\end{eqnarray*}
	Now suppose that the statement holds for $n = k$, i.e.,
	\[
	S_k = (-1)^{k+1} \frac{1}{k!t(2t-(k+1))!}.
	\]
	We prove it for $n = k+1$:
	\begin{eqnarray*}
		S_k+2b_{k+1} &=& (-1)^{k+1} \frac{1}{k!t(2t-(k+1))!}+(-1)^{k+2} \frac{2}{(k+1)!(2t-(k+1))!} \\
		&=& (-1)^{k+2} \left(\frac{2}{(k+1)!(2t-(k+1))!}-\frac{1}{k!t(2t-(k+1))!}\right) \\
		&=& (-1)^{k+2} \frac{2t-(k+1)}{(k+1)!t(2t-(k+1))!} \\
		&=& (-1)^{k+2} \frac{1}{(k+1)!t(2t-(k+2))!} = S_{k+1}.
	\end{eqnarray*}
	Therefore, 
	\[
	2\sum\limits_{r=0}^{t-1} b_r = S_{t-1} = -(-1)^{t+1} \frac{1}{t!t!}.
	\]
\end{proof}

\bigskip

At this stage, we can state the following nice property: 

\begin{theorem} 
	The sum of all elements in any column of a DNA matrix (\ref{dna}) of order $n+1$ is the polynomial $(\alpha + \beta)^n-1$.
\end{theorem}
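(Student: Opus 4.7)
The plan is to split the column sum $\sum_{i=1}^{n+1} a_{i,j}$ along the two pieces of the generating function (\ref{dna}) and evaluate each one independently. Write $a_{i,j}=P_{i,j}+C_{i,j}$, where
\[
P_{i,j}=\sum_{s=0}^{j-1}\binom{j-1}{s}\binom{n-j+1}{i-j+s}\alpha^{n-(i-j+2s)}\beta^{i-j+2s},\qquad
C_{i,j}=\sum_{r=0}^{j-1}(-1)^{r+1}\binom{j-r}{i}\binom{j}{r}.
\]
I expect $\sum_i P_{i,j}$ to give the polynomial part $(\alpha+\beta)^n$ and $\sum_i C_{i,j}$ to produce the constant $-1$.

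For the first piece I would swap the sums over $i$ and $s$ and make the substitution $k=i-j+s$, so that $i-j+2s=k+s$ and $n-(i-j+2s)=n-k-s$. Because of the convention $\binom{p}{q}=0$ outside the standard range, the nonzero contributions are exactly those with $0\le k\le n-j+1$, which are all attained as $i$ ranges over $\{1,\dots,n+1\}$ for each fixed $s\in\{0,\dots,j-1\}$. The inner sum then factors as $\alpha^{j-1-s}\beta^{s}$ times $\sum_{k=0}^{n-j+1}\binom{n-j+1}{k}\alpha^{n-j+1-k}\beta^{k}$, and two applications of the binomial theorem yield
\[
\sum_{i=1}^{n+1}P_{i,j}=(\alpha+\beta)^{n-j+1}\sum_{s=0}^{j-1}\binom{j-1}{s}\alpha^{j-1-s}\beta^{s}=(\alpha+\beta)^{n-j+1}(\alpha+\beta)^{j-1}=(\alpha+\beta)^{n}.
\]

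For the second piece I would again swap the order of summation and use $\sum_{i=0}^{n+1}\binom{j-r}{i}=2^{j-r}$, which holds because $j-r\le j\le n+1$. Hence $\sum_{i=1}^{n+1}\binom{j-r}{i}=2^{j-r}-1$, and
\[
\sum_{i=1}^{n+1}C_{i,j}=\sum_{r=0}^{j-1}(-1)^{r+1}\binom{j}{r}(2^{j-r}-1)=A-B,
\]
where $A=\sum_{r=0}^{j-1}(-1)^{r+1}\binom{j}{r}2^{j-r}$ and $B=\sum_{r=0}^{j-1}(-1)^{r+1}\binom{j}{r}$. Completing each sum to $r=j$ and using $(2-1)^{j}=1$ and $(1-1)^{j}=0$ (for $j\ge 1$) gives $A=-1+(-1)^{j}$ and $B=(-1)^{j}$, hence $\sum_{i}C_{i,j}=-1$. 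Adding the two pieces yields $(\alpha+\beta)^{n}-1$.

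The main obstacle is bookkeeping rather than any deep idea: one must carefully check that the change of variable $k=i-j+s$ really traverses the entire effective support $\{0,\dots,n-j+1\}$ of the binomial coefficient (so that no boundary correction is needed) and that the range $j-r\le n+1$ is always available to convert the partial binomial sum into $2^{j-r}$. Once those index checks are handled, the two reductions via the binomial theorem are routine, and the boundary cases $j=1$ and the usual conventions on $\binom{p}{q}$ can be verified directly.
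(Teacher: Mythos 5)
Your proof is correct, and it takes a genuinely different route from the paper's in both halves of the decomposition. For the polynomial part, the paper fixes a monomial $\alpha^{n-v}\beta^v$, collects its coefficient across the column, and identifies it as $\binom{n}{v}$ via a generalized Stifel (Vandermonde-type) identity proved as a separate lemma; you instead swap the summation order and factor the double sum directly into a product $(\alpha+\beta)^{n-j+1}(\alpha+\beta)^{j-1}$ of two binomial expansions, which is cleaner and avoids that lemma entirely. (Your index check is the one point that genuinely needs care, and it holds: for fixed $s\le j-1$ the range of $k=i-j+s$ contains all of $\{0,\dots,n-j+1\}$, so the partial binomial sum is in fact complete.) For the constant part, the paper invokes its Lemma on $\sum_r(-1)^{r+1}\binom{j-r}{i}\binom{j}{r}$, which evaluates each entry's constant piece as $-\delta_{ij}$ so that the column sum of that piece is trivially $-1$; you instead sum over $i$ first, use $\sum_{i=1}^{n+1}\binom{j-r}{i}=2^{j-r}-1$, and finish with the binomial theorem at $(2-1)^j$ and $(1-1)^j$. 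What the paper's route buys is the stronger per-entry statement (the constant part of $a_{i,j}$ is exactly $-\delta_{ij}$), which it reuses in the proof of centrosymmetry; what your route buys is a shorter, self-contained argument for this particular theorem that needs neither auxiliary lemma.
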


\begin{proof}
From Lemma~\ref{lemma: control of sums}, it is a direct (although cumbersome ) calculation 	that the sum of the elements in the $\ell$-th column of the matrix is  given by
	\footnotesize
	\begin{eqnarray*}
		\sum \limits_{i=1}^{n+1} a_{i \ell}&=& \sum \limits_{i=1}^{n+1} \left( \sum \limits_{s=0}^{\ell-1} \binom{\ell-1}{s} \binom{n+1-\ell}{i-\ell+s} \alpha^{n-(i-\ell+2s)} \beta^{i-\ell+2s}+\sum \limits_{r=0}^{\ell-1} (-1)^{r+1} \binom{\ell-r}{i} \binom{\ell}{r} \right) \\
		&=& \sum \limits_{i=1}^{n+1} \sum \limits_{s=0}^{\ell-1} \binom{\ell-1}{s} \binom{n+1-\ell}{i-\ell+s} \alpha^{n-(i-\ell+2s)} \beta^{i-\ell+2s} - 1.
	\end{eqnarray*} \normalsize
Next, we determine the coefficient of $\alpha^{n-v} \beta^{v}$ for each $v=0,\dots,n$. Since we have the relation $i-\ell+2s = v$, it follows that $i = v + \ell - 2s$. We now consider different values of $s$:
	\begin{itemize}
		\item $s=0$: \footnotesize
		$$
		\sum_{0}^{0} \binom{\ell-1}{s} \binom{n-\ell+1}{v+\ell-(\ell-s)} \alpha^{n-(v+\ell-\ell+2s)} \beta^{v+\ell-\ell+2s} = \binom{\ell-1}{0} \binom{n-\ell+1}{v} \alpha^{n-v} \beta^v.
		$$
		\normalsize
		\item $s=1$:
		\begin{eqnarray*}
			& & \sum_{1}^{1} \binom{\ell-1}{s} \binom{n-\ell+1}{v+\ell-2-(\ell-s)} \alpha^{n-(v+\ell-2-\ell+2s)} \beta^{v+\ell-2-\ell+2s} \\
			&=& \binom{\ell-1}{1} \binom{n-\ell+1}{v-1} \alpha^{n-v} \beta^v.
		\end{eqnarray*}
		
		\item $s=2$:
		\begin{eqnarray*}
			& & \sum_{2}^{2} \binom{\ell-1}{s} \binom{n-\ell+1}{v+\ell-4-(\ell-s)} \alpha^{n-(v+\ell-4-\ell+2s)} \beta^{v+\ell-4-l+2s} \\
			&=& \binom{\ell-1}{2} \binom{n-\ell+1}{v-1} \alpha^{n-v} \beta^v.
		\end{eqnarray*}
		
		\item $s=\ell-2$:
		\begin{eqnarray*}
			& & \sum_{\ell-2}^{\ell-2} \binom{\ell-1}{s} \binom{n-\ell+1}{v-\ell+4-(\ell-s)} \alpha^{n-(v-\ell+4-\ell+2s)} \beta^{v-\ell+4-\ell+2s} \\
			&=& \binom{\ell-1}{\ell-2} \binom{n-\ell+1}{v-\ell+2} \alpha^{n-v} \beta^v.
		\end{eqnarray*}
		
		\item $s=\ell-1$:
		\begin{eqnarray*}
			& & \sum_{\ell-1}^{\ell-1} \binom{\ell-1}{s} \binom{n-\ell+1}{v-\ell+2-(\ell-s)} \alpha^{n-(v-\ell+2-\ell+2s)} \beta^{v-\ell+2-\ell+2s} \\
			&=& \binom{\ell-1}{\ell-1} \binom{n-\ell+1}{v-\ell+1} \alpha^{n-v} \beta^v.
		\end{eqnarray*}
	\end{itemize}
	Hence, the coefficient of $\alpha^{n-v} \beta^{v}$ is 
	\footnotesize
	\begin{eqnarray*}
		& &\left( \binom{\ell-1}{0} \binom{n-\ell+1}{v} +\binom{\ell-1}{1} \binom{n-\ell+1}{v-1}+ \cdots + \binom{\ell-1}{\ell-1} \binom{n-\ell+1}{v-\ell+1}  \right) \alpha^{n-v} \beta^{v} \\
		&=& \sum_{i=0}^{\ell-1} \binom{\ell-1}{i} \binom{n-\ell+1}{v-i} \alpha^{n-v} \beta^{v} \\
		&=& \sum_{i=0}^{\ell-1} \binom{\ell-1}{\ell-1-i} \binom{n-\ell+1}{v-(\ell-1-i)} \alpha^{n-v} \beta^{v} \\
		&=& \sum_{i=0}^{\ell-1} \binom{\ell-1}{i} \binom{n-\ell+1}{v-(\ell-1-i)} \alpha^{n-v} \beta^{v} \\
		&=& \binom{n}{v} \alpha^{n-v} \beta^{v},
	\end{eqnarray*}\normalsize
	where the last equality follows from the $p$-Stifel identity given by Lemma~\ref{lemma: pStifel}.
	We now replace $i = v + \ell - 2s$ into our expression and, noting that $0 \leq i-\ell+2s \leq n$, we finally obtain
	\begin{eqnarray*}
		& & \sum \limits_{i=1}^{n+1} \sum \limits_{s=0}^{\ell-1} \binom{\ell-1}{s} \binom{n+1-\ell}{i-\ell+s} \alpha^{n-(i-\ell+2s)} \beta^{i-\ell+2s}-1 \\
		&=& \sum \limits_{v=0}^{n} \sum \limits_{s=0}^{\ell-1} \binom{\ell-1}{s} \binom{n+1-\ell}{v+\ell-2s-\ell+s} \alpha^{n-(v+\ell-2s-\ell+2s)} \beta^{v+\ell-2s-\ell+2s}-1 \\
		&=& \sum \limits_{v=0}^{n} \sum \limits_{s=0}^{\ell-1} \binom{\ell-1}{s} \binom{n+1-\ell}{v-s} \alpha^{n-v} \beta^{v}-1 \\
		&=& \sum \limits_{v=0}^{n} \binom{n}{v}  \alpha^{n-v} \beta^{v}-1 = (\alpha + \beta)^n-1.
	\end{eqnarray*}
\end{proof}

\bigskip

We now present our main result:

\begin{theorem} \label{thm: DNA is CSM}
	A DNA matrix is a centrosymmetric matrix.
\end{theorem}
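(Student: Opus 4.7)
The plan is to read the formula in Definition 1 polynomially: identify $a_{i,j}$ as the coefficient of a specific monomial in a product of two linear forms raised to powers, and then derive centrosymmetry from a one-line substitution $x \leftrightarrow y$ in that product. This is the most natural route given the origin of the DNA matrix in the invariance equation (\ref{eq: polynomial equation}).

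First I would separate the entry into its two summands,
\[
a_{i,j}=P_{i,j}+Q_{i,j},\qquad Q_{i,j}=\sum_{r=0}^{j-1}(-1)^{r+1}\binom{j-r}{i}\binom{j}{r}.
\]
By Lemma~\ref{lemma: control of sums}, $Q_{i,j}=-1$ if $i=j$ and $0$ otherwise, so $Q_{n+2-i,n+2-j}=-\delta_{n+2-i,\,n+2-j}=-\delta_{i,j}=Q_{i,j}$; this diagonal correction is automatically centrosymmetric and can be set aside.

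Next I would prove the key identification: $P_{i,j}$ equals the coefficient of $x^{n-i+1}y^{i-1}$ in the product
\[
F_j(x,y):=(\alpha x+\beta y)^{n-j+1}(\beta x+\alpha y)^{j-1}.
\]
This is a straightforward double binomial expansion: choose $s$ factors of the form $\alpha y$ from $(\beta x+\alpha y)^{j-1}$ and $i-1-s$ factors of the form $\beta y$ from $(\alpha x+\beta y)^{n-j+1}$, collect powers of $\alpha,\beta,x,y$, and finally reindex by $s\mapsto j-1-s$ to match the exact shape of the first sum in (\ref{dna}); the convention $\binom{p}{q}=0$ for $q<0$ or $q>p$ makes the ranges compatible.

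With that identification in hand, centrosymmetry of $P$ drops out of a single observation. Swapping $x$ and $y$ interchanges $\alpha x+\beta y$ with $\beta x+\alpha y$, so
\[
F_j(y,x)=(\alpha x+\beta y)^{j-1}(\beta x+\alpha y)^{n-j+1}=F_{n+2-j}(x,y).
\]
Hence the coefficient of $x^{i-1}y^{n-i+1}$ in $F_{n+2-j}$ equals the coefficient of $y^{i-1}x^{n-i+1}$ in $F_j$, which says exactly $P_{n+2-i,\,n+2-j}=P_{i,j}$. Combined with the $Q$-part, this gives $a_{i,j}=a_{n+2-i,\,n+2-j}$.

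The main obstacle is the bookkeeping in the second step: matching the explicit expression in Definition~\ref{def: cs matrix} with the coefficient extracted from $F_j(x,y)$ requires careful index juggling (the reindexing $s\mapsto j-1-s$ and verifying that the exponents $\alpha^{n-(i-j+2s)}\beta^{i-j+2s}$ align) and a check that out-of-range contributions vanish under the stated convention. Once that identity is pinned down, no further binomial lemma is needed and the $x\leftrightarrow y$ symmetry closes the argument immediately.
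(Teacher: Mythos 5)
Your proposal is correct, and it takes a genuinely different route from the paper. The paper proves $a_{i,j}=a_{n+2-i,n+2-j}$ by direct manipulation of the double sums in Definition~\ref{def: cs matrix}: it splits into the cases $j<i$ and $j=i$, uses Lemma~\ref{lemma: control of sums} to discard the alternating sum off the diagonal, and then matches the two binomial sums term by term after carefully truncating the summation ranges via the vanishing conventions. You instead recognize the first summand $P_{i,j}$ as the coefficient of $x^{n-i+1}y^{i-1}$ in $F_j(x,y)=(\alpha x+\beta y)^{n-j+1}(\beta x+\alpha y)^{j-1}$ --- an identification I checked: with $b=j-1-s$ the exponents $\alpha^{n-(i-j+2s)}\beta^{i-j+2s}$ and the binomials line up exactly --- and then obtain centrosymmetry of $P$ from the single identity $F_j(y,x)=F_{n+2-j}(x,y)$, since extracting the coefficient of $x^{i-1}y^{n+1-i}$ from $F_{n+2-j}$ is the same as extracting the coefficient of $x^{n+1-i}y^{i-1}$ from $F_j$. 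The diagonal correction $Q_{i,j}=-\delta_{i,j}$ (by Lemma~\ref{lemma: control of sums}) is trivially centrosymmetric. What your approach buys is a conceptual explanation of \emph{why} the matrix is centrosymmetric: the swap $x\leftrightarrow y$ interchanges the two linear forms, which simultaneously reverses the row index and the column index; this also makes the proof essentially independent of the heavy index bookkeeping. What the paper's approach buys is self-containedness at the level of binomial identities, with no appeal to the generating-polynomial origin of the entries. The only part of your write-up that remains to be executed in full is the coefficient identification, but the reindexing $s\mapsto j-1-s$ you name is exactly the right substitution and the out-of-range terms do vanish under the stated convention, so there is no gap.
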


\begin{proof}
We prove that $a_{i,j} = a_{n+2-i, n+2-j}$ for $j \leq i$. By definition,
	\begin{eqnarray*}
		a_{n+2-i,n+2-j} &=&  \sum_{s=0}^{n+1-j} \binom{n+1-j}{s} \binom{j-1}{j-i+s} \alpha^{n-(j-i+2s)} \beta^{j-i+2s} \\ 
		&+& \sum_{r=0}^{n+1-j} (-1)^{r+1} \binom{n+2-j-r}{n+2-i} \binom{n+2-j}{r}.
	\end{eqnarray*}
	For the term 
	$$\binom{n+1-j}{s} \binom{j-1}{j-i+s} \neq 0,$$
	we require that $j-i+s \leq j-1$, which implies $s \leq i-1$. As a consequence, we get that $s = \min\{i-1, n+1-j\}$. Furthermore, from the condition $j-i+s \geq 0$, namely $s \geq i-j$, we also obtain  $s = \max\{0, i-j\}$.
	Now, assuming $j < i$, it follows that $s \geq i-j$, so we can rewrite
	\begin{eqnarray*}
		a_{n+2-i,n+2-j} &=&  \sum_{s=0}^{\min\{i-1,n+1-j\}} \binom{n+1-j}{i-j+s} \binom{j-1}{s} \alpha^{n-(j-i+2s)} \beta^{j-i+2s} \\ 
		&+& \sum_{r=0}^{n+1-j} (-1)^{r+1} \binom{n+2-j-r}{n+2-i} \binom{n+2-j}{r} \\
		&=& \sum_{s=0}^{\min\{i-1,n+1-j\}} \binom{n+1-j}{i-j+s} \binom{j-1}{s} \alpha^{n-(j-i+2s)} \beta^{j-i+2s},
	\end{eqnarray*}
	since, for $j < i$, we have $\sum \limits_{r=0}^{j-1} (-1)^{r+1} \binom{j-r}{i} \binom{j}{r} = 0$. 
	Additionally, for the term 
	$$\binom{n+1-j}{i-j+s} \binom{j-1}{s} \neq 0,$$ 
	we require $n+1-j \leq i-j+s$, which implies $s \leq n+1-i$. Moreover, since $s \leq j-1$, we deduce that $s \leq \min\{j-1, n+1-i\}$. 
	Comparing with $a_{i,j}$, we obtain:
	\begin{eqnarray*}
		a_{i,j} &=& \sum _{s=0}^{j-1} \binom{j-1}{s} \binom{n-j+1}{i-j+s} \alpha^{n-(i-j+2s)} \beta^{i-j+2s} \\
		&+& \sum _{r=0}^{j-1} \left( -1 \right) ^{r+1} \binom{j-r}{i} \binom{j}{r} \\
		&=& \sum _{s=0}^{\min\{j-1,n+1-i\}} \binom{j-1}{s} \binom{n-j+1}{i-j+s} \alpha^{n-(i-j+2s)} \beta^{i-j+2s},
	\end{eqnarray*}
	where we used the fact that $j < i$ and so  $j-r < i$ for all $r = 1, \dots, j-1$. Therefore, we conclude that $a_{i,j} = a_{n+2-i,n+2-j}$ for $j < i$. 
    
	Now, consider the case $i = j$. We have:
	\begin{eqnarray*}
		a_{n+2-i,n+2-i} &=& \sum_{s=0}^{n+1-i} \binom{n+1-i}{s} \binom{i-1}{s} \alpha^{n-2s} \beta^{2s} \\ 
		&+& \sum_{r=0}^{n+1-i} (-1)^{r+1} \binom{n+2-i-r}{n+2-i} \binom{n+2-i}{r}.
	\end{eqnarray*}
	For the term 
	$$\binom{n+1-i}{s} \binom{i-1}{s} \neq 0,$$ 
	we require that $s \leq n+1-i$ and $s \leq i-1$, which implies $s \leq \min\{i-1,n+1-i\}$. Furthermore, for 
	$$\binom{n+2-i-r}{n+2-i} \neq 0,$$
	it suffices that $n+2-i \leq n+2-i-r$, which forces $r = 0$. Hence, we obtain:
	\begin{eqnarray*}
		a_{n+2-i,n+2-i} = \sum_{s=0}^{n+1-i} \binom{n+1-i}{s} \binom{i-1}{s} \alpha^{n-2s} \beta^{2s} -1.
	\end{eqnarray*}
	On the other hand, for $a_{i,i}$, we have:
	\begin{eqnarray*}
		a_{i,i} &=& \sum_{s=0}^{n+1-i} \binom{i-1}{s} \binom{n+1-i}{s} \alpha^{n-2s} \beta^{2s} \\
		&+& \sum_{r=0}^{0} (-1)^{r+1} \binom{i-r}{i} \binom{i}{r} \\
		&=& \sum_{s=0}^{n+1-i} \binom{n+1-i}{s} \binom{i-1}{s} \alpha^{n-2s} \beta^{2s} -1 = a_{n+2-i,n+2-i},
	\end{eqnarray*}
which completes the proof.
\end{proof}

\section{The case $\alpha^2 - \beta^2 = 1$} \label{sec: class}

In this section we present properties of DNA matrices when $\alpha$ and $\beta$ in (\ref{dna}) satisfy  $\alpha = \cosh \theta$ and  $\beta = \sinh \theta$, for an arbitrary $\theta \neq 0.$ \\

 To deduce the results of the present section, we use explicitly the generator of the ring of invariant polynomials ${\mathbb R}^{1+1} \to {\mathbb R}$ under the invariance condition   (\ref{eq: invariance}) for a hyperbolic rotation $\theta \neq 0$. In the next subsection, we record the derivation of this generator.

\subsection{Hilbert basis for the ring of hyperbolic rotational invariant polynomials} \label{subseq: generator} 

For the Minkowski plane 
\[ {\mathbb R}^{1+1} = \{ (x,y) \ : \ ||(x,y)||_1 = (x^2 - y^2)^{1/2} \}, \]
we use hyperbolic complex coordinates: If we denote the complex hyperbolic numbers
\[\mathbb{C}_h = \{z= x + hy \mid x, y \in \mathbb{R}, h^2 = 1, h \notin \mathbb{R} \}, \]
then $\mathbb{R}^{1+1}\simeq \mathbb{C}_h$ with the standard action 
of a Lorenz rotation $\theta z=e^{h\theta} z$. 
 In ``real'' coordinates $(z, \bar{z})$, 
consider a polynomial
$$f(z)=\sum b_{i j} z^{i}\overline{z}^{j},$$   $b_{ij} \in \mathbb{C}_h$. Since this is a real-valued function,  then $\bar{b}_{ij} = b_{ji}.$ By $\theta$-invariance, it follows that
	$$b_{ij} e^{h\theta(i-j)}=b_{ij}.$$
But $\theta$ is a hyperbolic nonzero rotation, so  $a_{ij}=0$ or $i=j$. Hence,
$$f(x,y)= \sum_k b_{k} (x^2-y^2)^{k}.$$ 

From the above, we have just proved the following.

\begin{proposition} \label{prop:H basis}
The Hilbert basis for the ring of invariant polynomials ${\mathbb R}^{1+1} \to {\mathbb R}$ under the standard action of a hyperbolic rotation has one element, which is the norm 
\begin{equation} \label{eq: norm}
 ||(x,y)||_1^2 =  x^2 - y^2 .
\end{equation}
\end{proposition}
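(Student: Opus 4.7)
The plan is to exploit the fact that the hyperbolic rotation acts by multiplication by $e^{h\theta}$ in suitable ``complex'' coordinates on $\mathbb{R}^{1+1}$, reducing the invariance condition to a clean algebraic constraint on the coefficients. First I would introduce the hyperbolic complex numbers $\mathbb{C}_h = \{ x + hy : x,y \in \mathbb{R},\ h^2 = 1,\ h \notin \mathbb{R}\}$ and identify $\mathbb{R}^{1+1} \simeq \mathbb{C}_h$ so that the hyperbolic rotation $R_\theta$ corresponds to $z \mapsto e^{h\theta} z$, where $e^{h\theta} = \cosh\theta + h\sinh\theta$.

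Next I would write an arbitrary polynomial $f \colon \mathbb{C}_h \to \mathbb{C}_h$ in the pair $(z,\bar z)$ as
\[
f(z) = \sum_{i,j} b_{ij}\, z^i \bar z^j, \qquad b_{ij} \in \mathbb{C}_h,
\]
and impose two conditions: the reality condition $f = \bar f$, which forces $\bar b_{ij} = b_{ji}$ (so that $f$ really defines a real-valued polynomial on $\mathbb{R}^{1+1}$), and the invariance condition $f(e^{h\theta}z) = f(z)$, which gives the diagonal relation
\[
b_{ij}\, e^{h\theta(i-j)} = b_{ij}, \qquad \text{for all } i,j.
\]

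The main obstacle I anticipate is justifying that this relation forces $b_{ij} = 0$ whenever $i \neq j$, since unlike the Euclidean case, $e^{h\theta}$ lives in a ring with zero divisors. The point to verify is that $e^{h\theta(i-j)} = 1$ in $\mathbb{C}_h$ requires $\cosh(\theta(i-j)) = 1$ and $\sinh(\theta(i-j)) = 0$ simultaneously, i.e.\ $\theta(i-j) = 0$; under the standing assumption $\theta \neq 0$ this forces $i = j$. Once this is settled, only the diagonal terms $b_{kk}$ survive, and the reality constraint $\bar b_{kk} = b_{kk}$ puts them in $\mathbb{R}$.

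To conclude, I would observe that $z\bar z = (x+hy)(x-hy) = x^2 - y^2 = \|(x,y)\|_1^2$, so every invariant polynomial reduces to a real polynomial expression
\[
f(x,y) = \sum_{k} b_k (x^2 - y^2)^k, \qquad b_k \in \mathbb{R},
\]
which exhibits $\|(x,y)\|_1^2$ as a generator of the ring of invariants. Since no nontrivial polynomial relation kills this single generator (it is algebraically independent over $\mathbb{R}$), the Hilbert basis has exactly one element, as claimed.
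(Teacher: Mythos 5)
Your proposal follows essentially the same route as the paper's own proof: pass to hyperbolic complex coordinates on $\mathbb{C}_h$, expand $f$ in $(z,\bar z)$, impose reality and $\theta$-invariance to kill all off-diagonal coefficients, and observe that $z\bar z = x^2-y^2$ generates what remains. The only refinement worth making explicit (which you correctly flag as the obstacle, and which the paper itself glosses over) is that to deduce $b_{ij}=0$ from $b_{ij}\bigl(e^{h\theta(i-j)}-1\bigr)=0$ one needs $e^{h\theta(i-j)}-1$ to be a non-zero-divisor in $\mathbb{C}_h$, not merely nonzero; this holds because its hyperbolic norm equals $2\bigl(1-\cosh(\theta(i-j))\bigr)\neq 0$ whenever $\theta\neq 0$ and $i\neq j$.
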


We remark that this is a simple result related to the complex problem of existence of finite basis of invariant rings under the action of Lie groups. Hilbert initially proved that when the Lie group is compact, the ring of invariant polynomials admits a finite basis (\cite{hilbert1970theorie}, \cite{hilbert1970vollen}). Later, Luna extended the result to reductive groups (\cite{luna}). For counterexamples, we refer to Nagata’s counterexamples (\cite{nagata195914}).

\subsection{Characterization of singular matrices} \label{subseq: properties}

We start by establishing that, inside the present class, the classification as singular or non-singular is determined precisely by the parity of their order:

\begin{proposition} \label{detpar_en}
	For $n \geq 1$, let $A_n$ be a DNA matrix of order $n+1$ given in (\ref{dna}) with $\alpha^2 - \beta^2 = 1$. Then:
	\begin{itemize}
		\item If $n$ is odd, $\det A_n \neq 0$.
		\item If $n$ is even, $\det A_n = 0$.
	\end{itemize}
\end{proposition}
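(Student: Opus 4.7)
The plan is to exploit the fact, already established in Section~\ref{sec: sample computations}, that $A_n$ is exactly the coefficient matrix of the linear system (\ref{eq: polynomial equation}) coming from imposing $R_\theta$-invariance on a general homogeneous polynomial
\[
f_n(x,y)=\sum_{i=0}^n a_i\,x^{n-i}y^i
\]
of degree $n$. Thus a vector $(a_0,\ldots,a_n)^T$ lies in $\ker A_n$ if and only if $f_n$ is invariant under the hyperbolic rotation $R_\theta$. Since $\det A_n=0$ is equivalent to $\ker A_n\neq\{0\}$, the problem reduces to counting the homogeneous $R_\theta$-invariant polynomials of degree $n$.

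First, I would formally state the correspondence above and recall that the hypothesis $\alpha^2-\beta^2=1$ with $\alpha=\cosh\theta$, $\beta=\sinh\theta$ is exactly what puts us in the Minkowski setting treated in Subsection~\ref{subseq: generator}. Next, I would apply Proposition~\ref{prop:H basis}: every $R_\theta$-invariant polynomial on $\mathbb{R}^{1+1}$ is a polynomial in the single generator $x^2-y^2$. Taking the homogeneous component of degree $n$ of $\mathbb{R}[x^2-y^2]$ gives
\[
\left(\mathbb{R}[x^2-y^2]\right)_n=
\begin{cases}
\mathbb{R}\,(x^2-y^2)^{n/2}, & n\text{ even},\\[2pt]
\{0\}, & n\text{ odd}.
\end{cases}
\]
Hence the space of $R_\theta$-invariant homogeneous polynomials of degree $n$ is one-dimensional if $n$ is even (spanned by $(x^2-y^2)^{n/2}$) and trivial if $n$ is odd.

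Translating back via the correspondence, $\dim \ker A_n=1$ when $n$ is even and $\dim \ker A_n=0$ when $n$ is odd. Therefore $\det A_n=0$ precisely when $n$ is even, and $\det A_n\neq 0$ precisely when $n$ is odd, which is the claim. As a bonus, the same argument simultaneously identifies the null space in the singular case as the line spanned by the coefficient vector of $(x^2-y^2)^{n/2}$, matching the assertion in the introduction that the singular null space has minimal dimension.

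There is no substantial obstacle: the heavy lifting has already been done, first in Section~\ref{sec: sample computations} (identifying $A_n$ with the invariance system) and then in Proposition~\ref{prop:H basis} (describing the invariant ring). The only point requiring a brief but careful remark is that the correspondence between $\ker A_n$ and the degree-$n$ homogeneous invariants is a bijection of $\mathbb{R}$-vector spaces — i.e., the map $(a_0,\ldots,a_n)\mapsto \sum a_i x^{n-i}y^i$ is injective on $\ker A_n$ because the monomials $x^{n-i}y^i$ are linearly independent, and surjective onto invariant homogeneous polynomials of degree $n$ because any such polynomial is of that monomial form.
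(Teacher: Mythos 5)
Your argument is correct and is essentially the paper's own proof: both identify $\ker A_n$ with the space of $R_\theta$-invariant homogeneous polynomials of degree $n$ and then invoke Proposition~\ref{prop:H basis} to conclude that this space is nontrivial exactly when $n$ is even. Your version merely spells out the vector-space isomorphism more explicitly than the paper does.
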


\begin{proof}
For each $n$,  $A_n$ is the coefficient matrix of the homogeneous linear system generated by the polynomial equation (\ref{eq: polynomial equation}) in the $a_i$'s variables, $i = 0 \ldots, n$. Now, it follows directly from Proposition~\ref{prop:H basis} that   if $n$ is odd, then  $f_n \equiv 0$, that is, $\det A_n \neq 0$. Also, for any $n$ even,  there are non-zero homogeneous polynomials of degree $n$, which implies that   $\det A_n = 0$.  
\end{proof}

The singularity of odd-order matrices suggests the study of their  nontrivial null space. In this case,  we obtain the solution explicitly: 
\begin{theorem} \label{solucaosistema_en} For	$n$ even, let $A_n$ be a DNA matrix of order $n+1$ with entries given in (\ref{dna}) with $\alpha^2 - \beta^2 = 1$. Then, the null space of $A_n$ is generated by the vector $(x_1, \ldots, x_{n+1}) $, where
	\begin{eqnarray*}
		\begin{cases} 
			x_{2k}=0, \quad \text{for } k=1, \dots, \frac{n}{2}  \, \\
			x_{2k-1}=(-1)^{k-1} \binom{\frac{n}{2}}{k-1}, \quad \text{for } k=1, \dots, \frac{n}{2}+1.
		\end{cases}
	\end{eqnarray*}
\end{theorem}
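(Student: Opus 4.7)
The plan is to exploit the correspondence, set up in Section~\ref{sec: sample computations}, between null vectors of $A_n$ and coefficient vectors of homogeneous degree-$n$ polynomials $f_n(x,y) = \sum_{i=0}^{n} a_i\, x^{n-i} y^i$ invariant under the hyperbolic rotation $R_\theta$. By the very derivation of $A_n$ from the invariance equation (\ref{eq: polynomial equation}), one has $A_n\mathbf{a}=0$ if and only if $f_n\circ R_\theta = f_n$; thus the null space of $A_n$ is canonically identified with the degree-$n$ graded piece of the ring of $R_\theta$-invariant polynomials.

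With that translation in hand, the next step is to invoke Proposition~\ref{prop:H basis}, which identifies the invariant ring as $\mathbb{R}[x^2-y^2]$. For $n$ even, its degree-$n$ component is the one-dimensional space $\mathbb{R}\cdot(x^2-y^2)^{n/2}$. This immediately yields two facts at once: the null space of $A_n$ has dimension exactly~$1$ (confirming the minimality claim made in the introduction), and it is spanned by the coefficient vector of $(x^2-y^2)^{n/2}$.

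It then remains to read off those coefficients. The binomial theorem gives
\[
(x^2-y^2)^{n/2} \;=\; \sum_{j=0}^{n/2} (-1)^j \binom{n/2}{j} x^{n-2j} y^{2j},
\]
so $a_{2j} = (-1)^j \binom{n/2}{j}$ for $j=0,\ldots,n/2$ and $a_{2j+1}=0$ otherwise. Converting to the $1$-indexed convention $x_\ell = a_{\ell-1}$ of the statement (set $\ell=2k-1$ so that $j=k-1$ for the odd positions, and $\ell=2k$ with $a_{2k-1}=0$ for the even positions) reproduces precisely the claimed formulas.

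There is no real obstacle: once the invariant-theoretic translation is in place, the argument reduces to a one-line binomial expansion together with a careful index shift between the $0$-indexed coefficients of $f_n$ and the $1$-indexed entries of the null vector. The step that does the actual work is the appeal to Proposition~\ref{prop:H basis}, which both pins down the dimension of the null space and identifies its generator.
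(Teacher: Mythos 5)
Your proposal is correct and follows essentially the same route as the paper: the paper's own proof is a one-line appeal to the fact that every invariant homogeneous polynomial of even degree $n$ equals $a\,(x^2-y^2)^{n/2}$, which is exactly the translation via Proposition~\ref{prop:H basis} that you make explicit. Your version merely spells out the binomial expansion and the index shift that the paper leaves implicit.
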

\begin{proof}
This is direct from the fact that any invariant homogeneous polynomial of even degree $n$  is of the form \( f(x,y) = a (x^2 - y^2)^{n/2} \), for some $a \in {\mathbb R}$, by (\ref{eq: norm}).
\end{proof}

\noindent Theorem~\ref{solucaosistema_en} clearly reveals the combinatorial symmetry inherent in the algebraic expression of the null space of a DNA matrix.\\

\subsection{From theory to a special case} \label{sec: illustration} 

In this subsection we illustrate the elements discussed in the previous subsection for $\theta = \ln 2$, so  $$\alpha = 5/4, \quad \beta = 3/4.$$ The data in Table~1 have been computed with Maple (Maplesoft, 2024) up to degree 10.
\begin{table}[H] \label{table}
	\centering
	\caption{The case  $\theta = \ln 2$.}
	\resizebox{\textwidth}{!}{ 
		\begin{tabular}{c|c|c|c|c}
			\hline \hline
			$\begin{array}{c} \text{Polynomial} \\ \text{degree} \end{array}$ & Matrix & Determinant & Null vector & $\begin{array}{c} \text{Invariant} \\ \text{polynomial} \end{array}$ \\ \hline \hline
			
			1 & $A_1$ & $-\frac{1}{2}$ &  - & - \\ \hline
			
			2 & $A_2$ & $0$ & (1, 0, -1) & $a (x^2-y^2)$ \\ \hline
			
			3 & $A_3$ & $\frac{49}{16}$ & - & - \\ \hline
			
			4 & $A_4$ & $0$ & $\begin{array}{c} (1, 0, -2, 0 , 1) \\ \end{array}$ & $a (x^2-y^2)^2$ \\ \hline 
			
			5 & $A_5$ & $-\frac{47089}{512}$ & - & - \\ \hline
			
			6 & $A_6$ & $0$ & $\begin{array}{c} (1, 0, -3, 0, 3, 0, -1) \\ \end{array}$ & $a (x^2-y^2)^3$ \\ \hline
			
			7 & $A_7$ & $\frac{759498481}{65536}$ & -  & - \\ \hline
			
			8 & $A_8$ & $0$ & $\begin{array}{c} (1, -4, 0, 6, 0, -4, 1) \\ \end{array}$ & $a (x^2-y^2)^4$ \\ \hline
			
			9 & $A_9$ & $\frac{198321002857201}{33554432}$ & - & - \\ \hline
			
			10 & $A_{10}$ & $0$ & $\begin{array}{c} (1, -5, 0, 10, 0, -10, 0, 5, -1)  \end{array}$ & $a (x^2-y^2)^5$ \\
			
			\hline \hline
		\end{tabular}
	}
\end{table}

Finally, we emphasize that the correlation between the centrosymmetry of DNA matrices considered here and invariant polynomials in two variables holds for the standard action on the plane of any matrix of the form
\[ \begin{pmatrix}
\alpha & \beta \\ \beta & \alpha \end{pmatrix}, \]
for any $\alpha, \beta \in {\mathbb R}$. The additional properties of DNA matrices presented in this section are obtained specifically under the restriction to polynomials on the Minkowski plane that are invariant under a Lorenz rotation.

\section*{Acknowledgements}

The work of L.N. was partially supported by FAPESP, grant number 2022/12906-3 and the work of M.M. was partially supported by FAPESP, grant number 2019/21181-0.

\bibliographystyle{abbrv}

\bibliography{references}

\end{document}